\newtheorem{theorem}{Theorem}[section]
\newtheorem{proposition}[theorem]{Proposition}
\newtheorem{lemma}[theorem]{Lemma}
\newtheorem{definition}[theorem]{Definition}
\newtheorem{remark}[theorem]{Remark}
\newtheorem{example}[theorem]{Example}
\newenvironment{customthm}[1]
  {\innercustomthm}
  {\endinnercustomthm}
\newcommand{\R}{\mathbb{R}}
\newcommand{\N}{\mathbb{N}}
\renewcommand{\epsilon}{\varepsilon}
\begin{document}

\author{Johannes Wiesel}

\address{Johannes Wiesel\newline
Columbia University, Department of Statistics\newline
1255 Amsterdam Avenue\newline
New York, NY 10027, USA}
\email{johannes.wiesel@columbia.edu}

\thanks{MSC 2010 Classification: 60J45, 28A12, 28E10\\
We thank Massimo Marinacci for helpful comments.}

\keywords{Capacity, non-additive measure, Lusin's theorem, continuity.}

\title{On a Lusin theorem for capacities}

\date{\today}
\begin{abstract}
Let $X$ be a compact metric space and let $v$ be a sub-additive capacity defined on $X$. We show that Lusin's theorem with respect to $v$ holds if and only if $v$ is continuous from above.
\end{abstract}

\maketitle

\section{Introduction}

Throughout this article, we work on a compact metric space $(X,d)$.  Let us quickly recall the classical Lusin theorem, which reads as follows:

\begin{theorem}[Lusin, \cite{lusin1912proprietes}]\label{thm:lusin}
Let $\mu$ be a (Borel) probability measure on $X$ and fix $\epsilon>0$. Then for every Borel-measurable function $u:X\to \R$ there exists a compact set $K=K(u, \epsilon)$ such that $\mu(X\setminus K)\le \epsilon$ and the restriction $u|_{K}$ is continuous.
\end{theorem}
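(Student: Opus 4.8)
The plan is to reduce to bounded functions, dispose of simple functions directly via inner regularity of $\mu$, and then pass to a uniform limit. First I would compose $u$ with a homeomorphism $\phi\colon\R\to(-1,1)$, for instance $\phi(t)=t/(1+|t|)$. Since $\phi$ is a homeomorphism onto its image, the restriction $u|_{K}$ is continuous if and only if $(\phi\circ u)|_{K}$ is continuous, while $\phi\circ u$ is a bounded Borel function. Hence it suffices to prove the statement for a bounded $u$, say $|u|\le 1$; the final set $K$ for $u$ and for $\phi\circ u$ coincide.

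The measure-theoretic input I would invoke is inner regularity: on a compact metric space every finite Borel measure is inner regular with respect to compact sets, so for each Borel set $B$ and each $\delta>0$ there is a compact set $C\subseteq B$ with $\mu(B\setminus C)<\delta$. This relies on the standard regularity of Borel measures on metric spaces together with the fact that closed subsets of the compact space $X$ are themselves compact.

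Next I would handle simple functions. Given $s=\sum_{i=1}^{m}c_i\,\mathbf{1}_{B_i}$ with $\{B_i\}$ a finite Borel partition of $X$, and a budget $\delta>0$, I would choose compact sets $K_i\subseteq B_i$ with $\mu(B_i\setminus K_i)<\delta/m$ and set $L=\bigcup_{i=1}^{m}K_i$. The $K_i$ are pairwise disjoint compact sets, so each is relatively open in $L$ (its complement in $L$, being a finite union of compact sets, is closed). Since $s$ is constant on each $K_i$, it is locally constant, hence continuous, on the compact set $L$, and $\mu(X\setminus L)<\delta$.

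Finally I would combine these estimates along a uniformly convergent sequence of simple approximations. Choosing simple functions $u_n$ with $\|u-u_n\|_\infty\to 0$ (possible since $u$ is bounded) and applying the previous step with budget $\epsilon 2^{-n}$, I obtain compact sets $L_n$ on which $u_n$ is continuous and $\mu(X\setminus L_n)<\epsilon 2^{-n}$. Setting $K=\bigcap_{n\ge 1}L_n$ gives a compact set with $\mu(X\setminus K)\le\sum_{n\ge 1}\mu(X\setminus L_n)\le\epsilon$. On $K$ every $u_n$ is continuous and $u_n\to u$ uniformly, so $u|_{K}$ is continuous as a uniform limit of continuous functions. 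The main obstacle is precisely this last point: continuity of the limit requires \emph{uniform} convergence on the common compact set $K$, which is exactly what the initial reduction to a bounded $u$ secures, and it forces the discarded masses to be summable so that the total error stays within the prescribed budget $\epsilon$.
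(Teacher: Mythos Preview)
Your proof is correct and follows essentially the same strategy as the paper's argument (which proves the more general capacity version and then specializes): establish the result for ``simple-type'' functions via regularity of $\mu$, then pass to arbitrary $u$ by uniform approximation and intersect the resulting compact sets. The only cosmetic difference is that you first reduce to bounded $u$ via a homeomorphism $\phi$ and then use finitely-valued simple functions, whereas the paper skips the boundedness reduction by approximating any $u$ uniformly with the countably-valued functions $u_n(x)=\lfloor n\,u(x)\rfloor/n$.
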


In recent years, several articles have identified sufficient conditions for replacing the measure $\mu$ by a class of capacities or sublinear expectations: see e.g. \cite{cast} for an extension to Dempster capacities, \cite{congxin1994regularity, li2004lusin} for a Lusin theorem with respect to fuzzy measures, as well as \cite{zong2020egoroff} for a result concerning g-Expectations.\\

The aim of this article is to unify these recent advances by identifying a necessary and sufficient condition for an extension of Lusin's theorem to the class of sub-additive capacities. To the best of our knowledge this is the first result of its kind. Our main result can be stated as follows:

\begin{theorem}\label{thm:main}
Let $(X,d)$ be a compact metric space and let $v$ be a sub-additive capacity.
Then the property
\begin{quote}
for all $(O_n)_{n\in \N}$, $O_n$ open, $O_n\downarrow O \quad \Rightarrow \quad \lim_{n \to \infty} v(O_n)=v(O)$
\end{quote}
holds if and only if the property
\begin{quote}
for all $\epsilon>0$ and for all Borel measurable functions $u:X\to \R$ there exists a compact set $K=K(u, \epsilon)\subseteq X$, such that $v(X\setminus K)\le \epsilon$ and $u|_{K}$ is continuous
\end{quote}
is satisfied.
\end{theorem}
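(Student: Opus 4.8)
The plan is to prove the two implications separately; the forward implication ($\Rightarrow$ continuity from above on open sets) is short, so I would dispatch it first. Let $O_n\downarrow O$ with each $O_n$ open. Since $v$ is monotone, $v(O_n)$ decreases and $v(O_n)\ge v(O)$, so it suffices to bound $\lim_n v(O_n)$ above by $v(O)$. Fix $\epsilon>0$ and apply Lusin to the bounded Borel function $u\coloneq\sum_{n\ge 1}2^{-n}\mathds 1_{O_n}$, getting a compact $K$ with $v(X\setminus K)\le\epsilon$ and $u|_K$ continuous. Because the $O_n$ are nested one computes $\{u\ge 1-2^{-n}\}=O_n$, so $O_n\cap K$ is the preimage under $u|_K$ of a closed set, hence closed in $K$, while also being open in $K$ — thus clopen in $K$. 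Then $O_n\cap K$ and $K\setminus O_n$ are disjoint compacta, so I can thicken $O_n\cap K$ to an open set of $X$ meeting $K$ exactly in $O_n\cap K$; intersecting the first $n$ of these produces a decreasing sequence of open sets $V_n\subseteq X$ with $V_n\cap K=O_n\cap K$, and $W\coloneq\bigcap_n V_n$ satisfies $W\cap K=O\cap K$. Now continuity from above on open sets gives $v(V_n)\to v(W)$, while sub-additivity and monotonicity give $v(O_n)\le v(O_n\cap K)+v(X\setminus K)\le v(V_n)+\epsilon$ and $v(W)\le v(O\cap K)+v(X\setminus K)\le v(O)+\epsilon$. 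Hence $\lim_n v(O_n)\le v(W)+\epsilon\le v(O)+2\epsilon$; letting $\epsilon\downarrow 0$ finishes this direction.

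For the reverse implication I would first reduce to a bounded function: composing $u$ with a homeomorphism $\psi\colon\R\to(-1,1)$ changes neither Borel-measurability nor the family of compacta on which the function is continuous, so assume $u\colon X\to(-1,1)$. The key step — and the main obstacle — is the regularity statement that plays the role of inner regularity of $\mu$ in the classical proof of Theorem~\ref{thm:lusin}: \emph{for every Borel $A$ and every $\epsilon>0$ there are a closed $C\subseteq A$ and an open $U\supseteq A\setminus C$ with $v(U)\le\epsilon$}. In the non-additive setting this is strictly stronger than the soft estimate $v(C)\approx v(A)$, so I would prove it by showing that the family $\mathcal G$ of Borel sets with this property is a $\sigma$-algebra containing the open sets. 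Every open $A$ lies in $\mathcal G$: with $C_n\coloneq\{x:d(x,X\setminus A)\ge 1/n\}$ the sets $A\setminus C_n$ are open and decrease to $\emptyset$, so by hypothesis $v(A\setminus C_n)\downarrow v(\emptyset)=0$. Closure of $\mathcal G$ under countable intersections is immediate by taking $C\coloneq\bigcap_n C_n$ and $U\coloneq\bigcup_n U_n$ and summing $\sum_n v(U_n)$ via (countable) sub-additivity. For closure under complementation: given $C\subseteq A\subseteq C\cup U$, apply the open-set case to $C^c$ to obtain a closed $D\subseteq C^c$ and an open $W$ with $C^c\subseteq D\cup W$ and $v(W)\le\epsilon$; then $D\cap U^c$ is closed, is contained in $A^c$, and $A^c$ minus it is contained in $U\cup W$. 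Since $\mathcal G$ is a $\sigma$-algebra containing the open sets, it is the whole Borel $\sigma$-algebra.

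Granting this regularity statement, I would conclude exactly as in the classical argument. For each $m$ partition $(-1,1)$ into the finitely many dyadic half-open intervals $I_{m,k}$, set $A_{m,k}\coloneq u^{-1}(I_{m,k})$, and use the regularity statement to pick open $U_{m,k}$ with $v(U_{m,k})$ summably small and $A_{m,k}\setminus U_{m,k}$ closed. Putting $U\coloneq\bigcup_{m,k}U_{m,k}$ one has $v(U)\le\epsilon$ by sub-additivity, and $K\coloneq X\setminus U$ is compact with $v(X\setminus K)\le\epsilon$. For each fixed $m$ the sets $A_{m,k}\cap K$ form a finite partition of the compact set $K$ into closed sets, hence are relatively clopen and pairwise at positive distance, so $u_m\coloneq\sum_k (\inf I_{m,k})\,\mathds 1_{A_{m,k}\cap K}$ is continuous on $K$ with $\sup_K|u-u_m|\le 2^{-m}$; thus $u|_K$ is a uniform limit of continuous functions and therefore continuous.

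The step where I expect to spend the most care is the regularity statement for $\mathcal G$: non-additivity blocks the usual inner-regularity arguments and forces the $\sigma$-algebra bootstrap, with continuity from above used precisely (and only) to bring the open sets into $\mathcal G$, and (countable) sub-additivity used to merge the countably many error sets $U_{m,k}$ into a single small open set. Everything else is bookkeeping with monotonicity and finite sub-additivity.
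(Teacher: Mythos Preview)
Your argument for the implication ``Lusin $\Rightarrow$ continuity from above on open sets'' is circular. After building the decreasing open sets $V_n$ with $V_n\cap K=O_n\cap K$ you write ``continuity from above on open sets gives $v(V_n)\to v(W)$'' --- but continuity from above on open sets is exactly the property you are trying to establish (and $W$ need not even be open). Without that step the chain of inequalities never closes, and there is no obvious repair with your particular choice of $u=\sum_n 2^{-n}\mathds 1_{O_n}$: the sets $O_n\cap K$ are indeed clopen in $K$, but $O\cap K=\{u|_K=1\}$ is only closed (the value $1$ is a limit of the values $1-2^{-n}$ in the range of $u$), so $(O_n\setminus O)\cap K$ is not compact and no finite--intersection--property argument is available.

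The paper avoids this by applying Lusin to the indicators $u_n=\mathds 1_{O_n\setminus O}$ separately, obtaining compacts $K_n$ with $v(X\setminus K_n)\le 2^{-n}\epsilon$. Continuity of $u_n|_{K_n}$ makes each $(O_n\setminus O)\cap K_n$ clopen in $K_n$, hence compact; setting $\tilde K_n=\bigcap_{l\le n}K_l$ gives a \emph{decreasing} sequence of compact sets $\tilde K_n\cap(O_n\setminus O)$ with empty intersection, which by compactness must eventually be empty. Then sub-additivity yields $v(O_n\setminus O)\le v(\tilde K_n\cap(O_n\setminus O))+v(X\setminus\tilde K_n)\le\epsilon$ for large $n$. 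Your single-function idea can be salvaged in the same spirit (for instance by applying Lusin to $\sum_n 2^{-n}\mathds 1_{O_n\setminus O}$, so that $(O_n\setminus O)\cap K=\{u|_K\ge 1-2^{-n}\}$ is closed in $K$), but the point is that the compactness of these sets, not any continuity property of $v$, is what drives the limit.

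Your proof of the other implication (continuity $\Rightarrow$ Lusin) is correct and essentially the same as the paper's: you prove a regularity lemma via a $\sigma$-algebra bootstrap and then run the classical Lusin argument. Your regularity formulation (closed $C\subseteq A$ and open $U\supseteq A\setminus C$ with $v(U)$ small) is cosmetically different from the paper's ($F\subseteq A\subseteq O$ with $v(O\setminus F)$ small), but the two are readily seen to be equivalent, and your use of countable sub-additivity is justified by finite sub-additivity together with continuity from below.
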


\begin{remark}
As any probability measure $\mu$ is a subadditive capacity, Theorem \ref{thm:main} implies Theorem \ref{thm:lusin}.
\end{remark}

In conclusion, Lusin's theorem can only hold for a class of ``very regular" capacities. This class comprises e.g. the supremum of a finite number of probability measures, but not the supremum of an infinite number of probability measures in general: indeed, classical examples of 2-alternating capacities, which are e.g. given in \cite{huber1973minimax} and are motivated by distributional robustness, do not typically satisfy the above continuity condition as we will discuss in Section \ref{sec:counterex}.\\

The remainder of the article is structured as follows: In Section \ref{sec:not} we recall basic notions in the theory of capacities and set up the notation used throughout the article. We then provide a counterexample to the main result of \cite{cast} in Section \ref{sec:counterex}, motivating the quest for stricter regularity conditions on the class of sub-additive capacities. In Section \ref{sec:main} we recall and discuss our main result, before we provide its proof in Section \ref{sec:proof}.

\section{Notation}\label{sec:not}

\begin{definition}
Let $(X,d)$ be a metric space and let $\mathcal{B}$ denote its Borel $\sigma$-algebra. 
\begin{itemize}
\item A capacity $v:\mathcal{B}\to \R$ is a set function satisfying
\begin{enumerate}
\item $v(\emptyset)=0, \quad v(X)=1$,
\item $A\subseteq B \quad \Rightarrow \quad v(A)\le v(B)$,
\item $A_n \uparrow A \quad \Rightarrow \quad \lim_{n \to \infty}v(A_n)= v(A)$,
\end{enumerate}
\item A Choquet capacity fulfils \textit{(1)}-\textit{(3)} and 
\begin{align}\label{eq:Choquet} \tag{C}
F_n \text{ closed, }F_n \downarrow F \quad \Rightarrow \quad \lim_{n \to \infty}v(F_n)= v(F).
\end{align}
\item A Dempster capacity fulfils \textit{(1)}-\textit{(3)} and \eqref{eq:Choquet} for all sequences of sets $(A_n)_{n \in \N}$ such that $A_n \supseteq \bar{A}_{n+1}$ for all $n\in \N$.
\end{itemize}
\end{definition}

\begin{definition}
A capacity $v$ is called sub-additive, if $v(A\cup B)\le v(A)+v(B)$ for all Borel sets  $A,B$.
\end{definition}

\begin{definition} 
A capacity $v$ is called $2$-alternating or submodular, if
\begin{align*}
v(A\cup B) +v(A\cap B) \le v(A)+v(B)
\end{align*}
for all Borel sets $A,B$.
\end{definition}

\begin{definition}
We define the core of a capacity $v$ as 
\begin{align*}
\text{core}(v)&:= \{ \nu \in \mathcal{P}(X) \ : \ \nu(A)\le v(A) \text{ for all }A\in \mathcal{B}\},
\end{align*}
where $\mathcal{P}(X)$ denotes the set of Borel-probability measures on $X$.
\end{definition}

For a Borel set $A$ and $\delta>0$ we define its open $\delta$-neighbourhood as

\begin{align*}
A^{\delta}:=\inf \left\{ x\in X \ : \ \inf_{y\in A} d(x,y)< \delta\right\}
\end{align*}

and its closed $\delta$-neighbourhood as 
\begin{align*}
A^{\bar{\delta}}:=\inf \left\{ x\in X \ : \ \inf_{y\in A} d(x,y)\le \delta\right\}.
\end{align*}

We also denote the closure of a set $A\in \mathcal{B}$ by $\overline{A}$.

\section{A counterexample to \cite{cast}}\label{sec:counterex}

The main result of \cite{cast} is the following extension of Lusin's theorem to $2$-alternating Dempster capacities:
\begin{proposition}\label{prop:lusin_cap}
Let $v$ be a $2$-alternating Dempster capacity defined on $\mathcal{B}$. Suppose that $u:X\to \R$ is a Borel measurable function. Given $\epsilon>0$ there exists a compact set $K$ such that $v(X\setminus K)\le \epsilon$ and $u|_{K}$ is continuous.
\end{proposition}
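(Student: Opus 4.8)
The natural attempt is to transcribe the classical proof of Lusin's theorem (Theorem~\ref{thm:lusin}), with the inner regularity of $\mu$ by compact sets replaced by a regularity statement for $v$ extracted from the $2$-alternating and Dempster hypotheses. First reduce to bounded $u$: since $\{|u|\le N\}\uparrow X$, property~(3) gives $v(\{|u|>N\})\to 0$, so fix $N$ with $v(\{|u|>N\})\le\epsilon/2$ and work on $A_0:=\{|u|\le N\}$. Next record the bookkeeping fact that $2$-alternation forces finite subadditivity (as $v(A\cap B)\ge 0$), which together with property~(3) upgrades to countable subadditivity $v(\bigcup_k B_k)\le\sum_k v(B_k)$; this is what lets countably many approximation errors be summed. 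The core step is then the following: for every Borel set $B$ and every $\eta>0$ there is a closed (hence, $X$ being compact, compact) set $F\subseteq B$ with $v(B\setminus F)\le\eta$. For an open set $B=O$ this is almost free, since $X\setminus O=\bigcap_n (X\setminus O)^{1/n}$ in a metric space, so the closed sets $X\setminus (X\setminus O)^{1/n}$ increase to $O$ and property~(3) applies; for a general Borel set one would hope to run a monotone-class argument on the family of ``well-approximable'' sets, and it is exactly here that the Dempster condition and $2$-alternation are meant to do the work.

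Granting that regularity step, $K$ is built in the usual way. For each $n$ take the finite Borel partition $A_0=\bigsqcup_j A_{n,j}$ with $A_{n,j}=u^{-1}([j2^{-n},(j+1)2^{-n}))$, pick compacta $F_{n,j}\subseteq A_{n,j}$ with $\sum_{n,j} v(A_{n,j}\setminus F_{n,j})\le\epsilon/2$, set $K_n:=\bigsqcup_j F_{n,j}$ (a finite disjoint union of compacta, hence compact, its pieces lying at mutually positive distance), and put $K:=\bigcap_n K_n$. On $K_n$ the step function equal to $j2^{-n}$ on $F_{n,j}$ is continuous (its pieces being separated) and differs from $u$ by less than $2^{-n}$, so on $K$ these step functions converge uniformly to $u|_K$, which is therefore continuous; and by countable subadditivity $v(X\setminus K)\le v(\{|u|>N\})+\sum_{n,j} v(A_{n,j}\setminus F_{n,j})\le\epsilon$.

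The main obstacle is the regularity step --- and this is precisely where the argument breaks down: pushing inner approximation by closed sets from open sets to all Borel sets for a merely $2$-alternating Dempster capacity. Without additivity one cannot split $v(B)=v(F)+v(B\setminus F)$, and the complementation trick driving the usual monotone-class argument is unavailable, the conjugate $A\mapsto 1-v(X\setminus A)$ of a $2$-alternating capacity being supermodular rather than submodular. Indeed the step can fail outright: if $v(A)=1$ for every nonempty $A$ --- easily seen to be a $2$-alternating Dempster capacity on a compact space --- then no proper closed $F$ has $v(X\setminus F)$ small, and Lusin's theorem fails whenever $u$ is discontinuous. This is the gap in \cite{cast}, and it is what motivates replacing the Dempster condition by the strictly stronger requirement --- continuity from above along decreasing \emph{open} sequences --- isolated in Theorem~\ref{thm:main}.
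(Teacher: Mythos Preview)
Your proposal correctly reaches the paper's own conclusion: Proposition~\ref{prop:lusin_cap} is \emph{false} as stated, and the paper does not prove it but rather refutes it via the counterexample in Example~\ref{ex:huber}. You arrive at the same verdict by a different and in some ways cleaner route.

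The paper's counterexample is the Huber--Strassen capacity $v(A)=\min(\mu(A^{\bar\delta})+\epsilon,1)$ on $X=[0,1]$; a nontrivial portion of the work is the verification that this $v$ is a Dempster capacity, after which $u=\mathds{1}_{\{0\}}$ witnesses failure of Lusin's conclusion, since any compact $K$ on which $u$ is continuous has $K\neq X$ and hence $v(X\setminus K)\ge\epsilon$. Your counterexample, $v(A)=1$ for every nonempty $A$, is more elementary: the Dempster property is immediate on a compact space (a nested sequence $A_n\supseteq\bar A_{n+1}$ of nonempty sets cannot shrink to $\emptyset$), $2$-alternation is trivial, and any discontinuous $u$ works. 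What the paper's example buys is motivation --- it is a natural $2$-alternating capacity from robust statistics --- and it lets the paper pinpoint exactly where the argument of \cite{cast} breaks (namely, $v$ fails to remain a capacity in the enlarged topology $\tau_u$). Your discussion instead isolates the structural obstruction directly: inner regularity cannot be propagated from open sets to all Borel sets without additivity, which is precisely the observation motivating Theorem~\ref{thm:main}.

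One small slip: you describe the regularity step for open $O$ as ``almost free'' via property~(3), but (3) only yields $v(F_n)\uparrow v(O)$, not $v(O\setminus F_n)\to 0$, and without additivity these are not equivalent --- indeed in your own counterexample $v(O\setminus F)=1$ whenever $O\setminus F\neq\emptyset$. This does not affect your overall conclusion, since you correctly flag the argument as broken in any case.
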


Let us now consider the following example:
\begin{example}[see {\cite[Example 5]{huber1973minimax}}]\label{ex:huber}
Let $\epsilon, \delta>0$, $X$ be a compact metric space and fix a probability measure $\mu$ on $X$. Then according to \cite[Example 5]{huber1973minimax}, $v$ defined by $$v(A)=\min (\mu(A^{\bar{\delta}})+\epsilon,1)$$ for all closed sets $A\neq \emptyset$  is a $2$-alternating capacity. We recall here that  $$A^{\bar{\delta}}:=\inf \left\{ x\in X \ : \ \inf_{y\in A} d(x,y)\le \delta\right\}$$ is the closed $\delta$-neighbourhood of $A$. Furthermore 
\begin{align*}
\text{core}(v)= \{\nu \in \mathcal{P}(X)\ : \ \nu(A)\le \mu(A^{\bar{\delta}})+\epsilon \text{ for all }A\in \mathcal{B}\}.
\end{align*}
We check that it is also a Dempster capacity: take any sets $A_n$ such that $A_n \supseteq \bar{A}_{n+1}$. Let $A_n\downarrow A$. As $A_n \supseteq \bar{A}_{n+1}\supseteq A_{n+1}$ this implies that $\bar{A}_{n}\downarrow A$, and as $X$ is compact, in particular $A$ is compact. Furthermore $\bar{A}_n^{\bar{\delta}}=A_n^{\bar{\delta}}$ and
thus
\begin{align}\label{eq:setequality}
 \left(\bigcap_{n \in \N} \bar{A}_n \right)^{\bar{\delta}} = \bigcap_{n \in \N} A_n^{\bar{\delta}}.
\end{align}
Indeed, the $\subseteq$-relation is clear. Now take $y\in X$ such that for all $n\in \N$ there exists some $x_n\in \bar{A}_n$ with $d(x_n, y)\le \delta$. As $\bar{A}_n$ is compact, there either exists $x\in \bigcap\bar{A}_n$ and a subsequence of $x_n$ converging to $x$, or $A_n =\emptyset$ for all large $n\in \N$. In the second case the claim is trivial. In the first case, in particular $ d(y,x)\le \limsup_{n \to \infty}d(y,x_n)\le \delta$, so $y\in  \left(\bigcap_{n \in \N} \bar{A}_n \right)^{\bar{\delta}}$. This shows \eqref{eq:setequality}. We now conclude that
\begin{align*}
\mu(A^{\bar{\delta}})=\mu\left(\left( \bigcap_{n \in \N} \bar{A}_n\right)^{\bar{\delta}}\right) =\lim_{n\to \infty} \mu (A_n^{\bar{\delta}}),
\end{align*}
which implies
\begin{align*}
v(A)=\min\left(\mu\left(A^{\bar{\delta}}\right)+\epsilon,1\right)=\lim_{n \to \infty} \min\left(\mu\left(A^{\bar{\delta}}_n\right)+\epsilon,1\right)  =\lim_{n\to \infty} v (A_n),
\end{align*}
so that $v$ is in fact a Dempster capacity.\\
On the other hand, Proposition \ref{prop:lusin_cap} does not hold: take e.g. $X=[0,1], d=|\cdot|$ and $u=\mathds{1}_{0}$, and any probability measure $\mu$. Then $u$ is clearly discontinuous. In particular for any compact set $K$ such that $u|_K$ is continuous we must have $K\neq X$, so that $v(X\setminus K) \ge \epsilon$ by definition of $v$. In particular Proposition \ref{prop:lusin_cap} is not satisfied for $\tilde{\epsilon}<\epsilon$.\\
Examing \cite{cast} the problem seems to be here: in \cite[Proof of Theorem 1]{cast} one has to check that $v$ is a capacity in the enlarged topology $\tau_u$, which makes $u$ continuous. For $u=\mathds{1}_{0}$, this means that necessarily $\tau_u$ contains $\{0\}$ and its complement. Consider now the $\tau_u$-closed sets $A_n=(0, 1/n]=(\{0\}\cup (1/n,1])^c \downarrow \emptyset$, for which $0=v(\emptyset)< \delta= v(A_n)$, so $v$ is not a capacity wrt. $\tau_u$. In particular arguments of \cite[Lemma 3]{cast} or \cite[Lemma 2.2]{huber1973minimax} do not apply and tightness of $\text{core}(v)$ is not satisfied.
\end{example}

\section{Main result}\label{sec:main}

From our discussion in Section \ref{sec:counterex} it seems that the assumptions on the capacity $v$ are not strong enough in order for a version of Lusin's theorem in the spirit of Proposition \ref{prop:lusin_cap} to hold. In fact, the continuity property from above for Dempster capacities given by 
\begin{align*}
A_n \downarrow A, \ A_n \supseteq \bar{A}_{n+1} \ \forall n \in \N \Rightarrow\quad \lim_{n \to \infty} v(A_n)=v(A)
\end{align*}
is not sufficient, as it does not demand any continuity from above for the pathological case, where the sequence of $(A_n)_{n\in \N}$ satisfies $A_n \downarrow \emptyset$ and $A_n \neq \emptyset$ for all $n\in \N$.\\
Contrary to this, continuity from above on the open sets seems to be crucial as stated in our main result, which we recall here for the the reader's convenience:

\begin{customthm}{1.1}
Let $(X,d)$ be a compact metric space and let $v$ be a sub-additive capacity. Then the property
\begin{quote}
for all $(O_n)_{n\in \N}$, $O_n$ open, $O_n\downarrow O \quad \Rightarrow \quad \lim_{n \to \infty} v(O_n)=v(O)$
\end{quote}
holds if and only if the property
\begin{quote}
for all $\epsilon>0$ and for all Borel measurable functions $u:X\to \R$ there exists a compact set $K=K(u, \epsilon)\subseteq X$, such that $v(X\setminus K)\le \epsilon$ and $u|_{K}$ is continuous
\end{quote}
holds.
\end{customthm}

\begin{remark}
Either from first principles or by checking the proof below, one concludes that the condition 
\begin{quote}
for all $(O_n)_{n\in \N}$, $O_n$ open, $O_n\downarrow O \quad \Rightarrow \quad \lim_{n \to \infty} v(O_n)=v(O)$
\end{quote}
for a capacity $v$ is actually equivalent to 
\begin{quote}
for all $(A_n)_{n\in \N}$, $A_n\downarrow A \quad \Rightarrow \quad \lim_{n \to \infty} v(A_n)=v(A)$,
\end{quote}
i.e. $v$ is continuous from above.
\end{remark}

In Example \ref{ex:huber} we have already observed that $v$ defined as the supremum of probability measures in a Prokhorov-ball around $\mu$ is not continuous from above on open sets. 
However, we can consider the following simple setting:

\begin{example}
Let $m \in \N$ and $\mu_1, \dots, \mu_m$ be probability measures on $(X,d)$. Then $v$ defined via $$v(A):=\sup_{k=1, \dots, m} \mu_k(A)$$ for all sets $A\in \mathcal{B}$ is a sub-additive capacity, which is continuous from above on the open sets.  
\end{example}
\begin{proof}
Fix a sequence of open sets $(O_n)_{n \in \N}$ such that $O_n \downarrow O$. Then as each $\mu_k$ is a proability measure, we have $\lim_{n \to \infty} \mu_k(O_n)-\mu_k(O)=0$ and thus also $$\lim_{n \to \infty} \sup_{k=1, \dots, m} \mu_k(O_n)-\mu_k(O)=\sup_{k=1, \dots, m} \lim_{n \to \infty} \mu_k(O_n)-\mu_k(O)=0.$$
The remaining properties of $v$ follow directly from the fact, that $\mu_k$ are probability measures.
\end{proof}

Lastly we remark that working on a compact metric space is not a severe restriction in this paper as the following result shows:

\begin{lemma}
Fix $\epsilon>0$. If $v$ is a Choquet capacity, then there exists a compact set $K\subseteq X$ such that $v(X\setminus K)\le \epsilon$.
\end{lemma}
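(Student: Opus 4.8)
If $X$ is compact there is nothing to prove, since $K=X$ gives $v(X\setminus K)=v(\emptyset)=0$; the point of the lemma, and of the remark preceding it, is that the conclusion persists when $X$ is merely a complete separable metric space, and this is the case I would treat. The plan is the standard tightness argument, reorganised so that the non-additive continuity axioms \textit{(3)} and \eqref{eq:Choquet} are only ever applied to sequences of sets of the appropriate type.

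First I would fix a countable dense set $\{x_i:i\in\N\}$ in $X$. For each $m\in\N$ the open balls $B(x_i,1/m)$ cover $X$, so the finite unions $U_{m,n}:=\bigcup_{i\le n}B(x_i,1/m)$ increase to $X$ as $n\to\infty$, and hence the closed sets $F_{m,n}:=X\setminus U_{m,n}$ decrease to $\emptyset$. Applying \eqref{eq:Choquet} to this decreasing sequence of closed sets gives $v(F_{m,n})\to v(\emptyset)=0$ as $n\to\infty$, so I may choose $n_m$ with $v(F_{m,n_m})\le\epsilon\,2^{-m}$, and set
\[
K:=\bigcap_{m\in\N}\overline{U_{m,n_m}}.
\]
This set is closed, and for every $m$ it is contained in $\bigcup_{i\le n_m}\{y\in X:d(x_i,y)\le 1/m\}$, i.e.\ in a union of $n_m$ sets of diameter at most $2/m$; hence $K$ is totally bounded, and since $X$ is complete this makes $K$ compact.

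It then remains to estimate $v(X\setminus K)$. Writing $G_m:=X\setminus\overline{U_{m,n_m}}$, each $G_m$ is open, $G_m\subseteq F_{m,n_m}$ (so $v(G_m)\le v(F_{m,n_m})\le\epsilon\,2^{-m}$), and $X\setminus K=\bigcup_{m\in\N}G_m$. Since the partial unions $\bigcup_{m\le M}G_m$ increase to $X\setminus K$, continuity from below (property \textit{(3)}) together with sub-additivity yields
\[
v(X\setminus K)=\lim_{M\to\infty}v\Big(\bigcup_{m\le M}G_m\Big)\le\sum_{m=1}^{\infty}v(G_m)\le\sum_{m=1}^{\infty}\epsilon\,2^{-m}=\epsilon,
\]
which is the claim.

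The only genuine subtlety I anticipate is matching the continuity axioms to the set types: \eqref{eq:Choquet} is available solely for closed sets decreasing to a closed set, which forces the argument through the closed sets $F_{m,n}$, whereas the final summation must be performed over the open sets $G_m$ and therefore relies on continuity from below together with sub-additivity. In particular the passage from finite to infinite unions genuinely uses sub-additivity, so the statement is really one about sub-additive Choquet capacities; as these are the only capacities relevant in this article, this restriction costs nothing here.
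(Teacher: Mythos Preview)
Your argument is correct and self-contained, and it differs from the paper's proof, which simply cites \cite[Lemma 2.2]{huber1973minimax} on the tightness of $\text{core}(v)$. The paper's route is indirect: one first shows that the set of probability measures dominated by $v$ is tight, and then reads off the conclusion for $v$ via the representation of $v$ through its core; this last step in fact uses structure (essentially $2$-alternation, as in Huber--Strassen) that goes beyond the bare Choquet axioms. Your approach bypasses the core entirely and works directly with $v$: you use \eqref{eq:Choquet} on the closed sets $F_{m,n}$ to locate a totally bounded closed $K$, and then axiom \textit{(3)} plus sub-additivity to bound $v(X\setminus K)$. This is more elementary and makes the hypotheses transparent; as you correctly note, sub-additivity is genuinely needed for the final summation, so your proof is really for sub-additive Choquet capacities, which is harmless in the context of this paper. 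One cosmetic point: you could just as well take $K=\bigcap_m \bigcup_{i\le n_m}\{y:d(x_i,y)\le 1/m\}$ using closed balls from the start, which avoids passing through $\overline{U_{m,n_m}}$ and makes both the total boundedness and the inclusion $X\setminus K\subseteq\bigcup_m F_{m,n_m}$ immediate.
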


\begin{proof}
This follows from \cite{huber1973minimax}[Lemma 2.2], which states that the core of $v$ is tight.
\end{proof}

\section{Proof of the main result}\label{sec:proof}

We start by showing that continuity from above on open sets implies Lusin's theorem for $v$. For this, we first introduce a stronger notion of regularity for capacities in comparison to \cite{cast}:

\begin{definition}
A set function $v:\mathcal{B}\to [0,1]$ is called regular, if the following holds: for all Borel sets $A$ and all $\epsilon>0$ there exists an open set $O$ and a closed set $F$ such that $F\subseteq A \subseteq O$ and $v(O\setminus F)\le \epsilon$. 
\end{definition}

With this definition at hand, the next proposition follows naturally:
\begin{proposition}\label{prop:reg_cap}
Let $(X,d)$ be a compact metric space and let $v$ be a sub-additive capacity satisfying
\begin{align*}
\text{for all }(O_n)_{n\in \N},\ O_n \text{ open, } O_n\downarrow O \quad \Rightarrow \quad \lim_{n \to \infty} v(O_n)=v(O).
\end{align*}
Then $v$ is regular.
\end{proposition}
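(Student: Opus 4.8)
The plan is to show that the family $\mathcal{R}$ of all Borel sets $A$ for which the regularity property holds (for every $\epsilon>0$) is a $\sigma$-algebra containing every closed set; since in a metric space the closed sets generate the Borel $\sigma$-algebra $\mathcal{B}$, this forces $\mathcal{R}=\mathcal{B}$, which is exactly the assertion that $v$ is regular.

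First I would check that every closed set $F$ lies in $\mathcal{R}$. Here one takes the closed set to be $F$ itself and the open set to be an open neighbourhood $F^{1/n}$. Since $F$ is closed we have $\bigcap_{n} F^{1/n}=F$, so $F^{1/n}\downarrow F$ and consequently the sets $F^{1/n}\setminus F=F^{1/n}\cap F^{c}$, which are open, decrease to $\emptyset$. Continuity from above on open sets then gives $v(F^{1/n}\setminus F)\to v(\emptyset)=0$, so $F\subseteq F\subseteq F^{1/n}$ works for $n$ large. Closure of $\mathcal{R}$ under complements is immediate: if $F\subseteq A\subseteq O$ with $v(O\setminus F)\le\epsilon$, then $O^{c}\subseteq A^{c}\subseteq F^{c}$, and $F^{c}\setminus O^{c}=O\setminus F$, so the same estimate serves $A^{c}$.

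The substantive step is closure under countable unions. Given $A_{k}\in\mathcal{R}$ and $\epsilon>0$, choose $F_{k}\subseteq A_{k}\subseteq O_{k}$ with $v(O_{k}\setminus F_{k})\le\epsilon\,2^{-(k+1)}$, set $O=\bigcup_{k}O_{k}$, and for each $N$ set $G_{N}=\bigcup_{k\le N}F_{k}$, which is closed. Then $G_{N}\subseteq A:=\bigcup_{k}A_{k}\subseteq O$. The geometric input is the inclusion $O\setminus\bigcup_{k}F_{k}\subseteq\bigcup_{k}(O_{k}\setminus F_{k})$. To bound $v$ of the right-hand side, note that $\bigcup_{k\le N}(O_{k}\setminus F_{k})\uparrow\bigcup_{k}(O_{k}\setminus F_{k})$, so continuity from below (property (3) of a capacity) together with finite sub-additivity yields $v\big(\bigcup_{k}(O_{k}\setminus F_{k})\big)\le\sum_{k}v(O_{k}\setminus F_{k})\le\epsilon/2$. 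Finally, $O\setminus G_{N}$ is open and decreases in $N$ to $O\setminus\bigcup_{k}F_{k}$, so continuity from above on open sets gives $v(O\setminus G_{N})\to v(O\setminus\bigcup_{k}F_{k})\le\epsilon/2$; hence $v(O\setminus G_{N})\le\epsilon$ once $N$ is large enough, and $A\in\mathcal{R}$.

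The main obstacle — and the reason the hypothesis enters in precisely the form stated — is the failure of countable sub-additivity together with the fact that $\bigcup_{k}F_{k}$ need not be closed, so the classical inner-approximation argument for measures does not transcribe verbatim. The workaround is to approximate from inside only by the \emph{finite} closed unions $G_{N}$ and then pass to the limit, which is legitimate exactly because the difference sets $O\setminus G_{N}$ (and, in the closed-set step, $F^{1/n}\setminus F$) are open, so continuity from above on open sets applies; upgrading finite to countable sub-additivity via continuity from below is the only other point that needs care.
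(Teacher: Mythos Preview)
Your proof is correct and follows essentially the same approach as the paper's: both define the class of regular Borel sets, show it contains the closed sets via $F^{\delta}\setminus F\downarrow\emptyset$, and prove closure under countable unions by combining continuity from below with sub-additivity to bound $v\big(\bigcup_k O_k\setminus\bigcup_k F_k\big)\le\epsilon/2$ and then invoking continuity from above on the open sets $O\setminus G_N$ to truncate to a finite closed union. Your explicit remark about why the classical measure-theoretic argument needs modification (no countable sub-additivity, $\bigcup_k F_k$ not closed) nicely isolates where each hypothesis enters.
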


\begin{proof}
We define $\mathcal{M}$ as the collection of all Borel sets, which are regular for $v$, i.e. for all $\epsilon>0$ there exists an open set $O$ and a closed set $F$ such that $F\subseteq A \subseteq O$ and $v(O\setminus F)\le \epsilon$. We show that $\mathcal{B}\subseteq \mathcal{M}$. Note that $\emptyset\in \mathcal{M}$, as it is clopen. Furthermore $\mathcal{M}$ is closed under complementation: indeed take a set Borel set $A$ and  $\epsilon>0$ such that there exists an open set $O$ and a closed set $F$ with $F\subseteq A \subseteq O$ and $v(O\setminus F)\le \epsilon$. Then we also have $O^c\subseteq A^c \subseteq F^c$, $F^c$ is open and $O^c$ is closed. Moreover, $v(F^c \setminus O^c)=v(O\setminus F)\le \epsilon$. In conclusion $A^c\in \mathcal{M}$.\\
 Next we fix a collection $\{A_n\}_{n=1}^\infty$ of sets in $\mathcal{M}$. Our aim is to show that $\bigcup_{n=1}^\infty A_n \in \mathcal{M}$. Let us fix $\epsilon>0$, and for each $n\in \N$, let us take some open sets $O_n$ and closed sets $F_n$ such that $F_n \subseteq A_n \subseteq O_n$ and $v(O_n \setminus F_n)\le 2^{-n-1}\epsilon$. The we conclude that
\begin{align*}
\bigcup_{n=1}^N F_n \subseteq \bigcup_{n=1}^\infty A_n \subseteq \bigcup_{n=1}^\infty O_n
\end{align*}
for any $N\in \N$, $\bigcup_{n=1}^N F_n$ is closed, $\bigcup_{n=1}^\infty O_n$ is open and thus by continuity from above on open sets $(4)$
\begin{align}\label{eq:open}
v \left( \bigcup_{n=1}^\infty O_n \setminus \bigcup_{n=1}^N F_n \right)\downarrow v\left( \bigcup_{n=1}^\infty O_n \setminus \bigcup_{n=1}^\infty F_n \right)
\end{align}
for $N\to \infty$. On the other hand, by monotonicity (2) and sub-additivity (5) of $v$ we conclude that
\begin{align*}
v\left( \bigcup_{n=1}^N O_n \setminus \bigcup_{n=1}^\infty F_n \right) &\le \sum_{n=1}^N v\left( O_n \setminus \bigcup_{n=1}^\infty F_n \right)\\
&\le  \sum_{n=1}^N v\left( O_n \setminus F_n \right)\\
&\le \sum_{n=1}^N 2^{-n-1}\epsilon.
\end{align*}
Lastly, $$\bigcup_{n=1}^N O_n \setminus \bigcup_{n=1}^\infty F_n\uparrow \bigcup_{n=1}^\infty O_n \setminus \bigcup_{n=1}^\infty F_n$$ for $N\to \infty$, so that the above together with continuity from below (3) implies 
\begin{align*}
v\left( \bigcup_{n=1}^\infty O_n \setminus \bigcup_{n=1}^\infty F_n \right)
&=\lim_{N\to \infty} v\left( \bigcup_{n=1}^N O_n \setminus \bigcup_{n=1}^\infty F_n \right)\\
&\le \lim_{N \to \infty} \sum_{n=1}^N 2^{-n-1}\epsilon
\le \frac{\epsilon}{2}.
\end{align*}
In particular, using \eqref{eq:open}, there exists $N\in \N$ such that 
\begin{align*}
v\left( \bigcup_{n=1}^\infty O_n \setminus \bigcup_{n=1}^N F_n \right)\le \epsilon.
\end{align*}
This shows that $\bigcup_{n \in \N} A_n\in \mathcal{M}$ and so $\mathcal{M}$ is a $\sigma$-algebra. We now show that it contains the closed sets. Indeed, take a closed set $F$ and recall that its open $\delta$-neighbourhood is given by $$F^\delta= \left\{x\in X\ : \inf_{y \in F} d(x,y)<\delta\right\}.$$ Then $F^{\delta}$ is open for $\delta>0$ and as $F$ is closed we thus have $F^\delta \setminus F \downarrow \emptyset$, which implies by continuity from above on open sets (4) that 
\begin{align*}
v(F^{\delta} \setminus F) \downarrow v(\emptyset)=0.
\end{align*}
This in particular implies that $\mathcal{B}\subseteq \mathcal{M}$, which concludes the proof.
\end{proof}

We can now state the following version of Lusin's theorem for capacities:

\begin{theorem}
Let $(X,d)$ be a compact metric space and let $v$ be a sub-additive capacity satisfying
\begin{align*}
O_n \text{ open, } O_n\downarrow O \quad \Rightarrow \quad \lim_{n \to \infty} v(O_n)=v(O).
\end{align*}

Let $u:X\to \R$ be a Borel measurable function. Given $\epsilon>0$ there exists a compact set $K$ such that $v(X\setminus K)\le \epsilon$ and $u|_{K}$ is continuous.
\end{theorem}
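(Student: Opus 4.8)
The plan is to run the classical partition proof of Lusin's theorem with the sub-additive capacity $v$ in place of a measure, using the regularity established in Proposition~\ref{prop:reg_cap} as a substitute for inner regularity, after first reducing to the case of a bounded function. For the reduction, fix a homeomorphism $\phi\colon\R\to(0,1)$. Then $\phi\circ u\colon X\to(0,1)$ is again Borel measurable, and for any compact $K$ the restriction $u|_K=\phi^{-1}\circ\big((\phi\circ u)|_K\big)$ is continuous whenever $(\phi\circ u)|_K$ is, since $\phi^{-1}$ is continuous. Hence it suffices to prove the statement assuming that $u$ takes values in $[0,1)$.

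So assume $u\colon X\to[0,1)$. For $n\in\N$ and $k\in\{0,\dots,n-1\}$ set $I_{n,k}=[k/n,(k+1)/n)$ and $E_{n,k}=u^{-1}(I_{n,k})$; these Borel sets form a \emph{finite} partition of $X$. By Proposition~\ref{prop:reg_cap}, which in particular provides inner approximation by closed (hence compact) sets, choose for each $n,k$ a compact set $F_{n,k}\subseteq E_{n,k}$ with $v(E_{n,k}\setminus F_{n,k})\le\epsilon\,2^{-n-1}/n$, and put $C_n=\bigcup_{k=0}^{n-1}F_{n,k}$, a finite union of compacts, hence compact. Since the $F_{n,k}$ lie inside the pairwise disjoint sets $E_{n,k}$ whose union is $X$, one has $X\setminus C_n=\bigcup_{k=0}^{n-1}(E_{n,k}\setminus F_{n,k})$, so sub-additivity gives $v(X\setminus C_n)\le\epsilon\,2^{-n-1}$. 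Now let $K=\bigcap_{n\in\N}C_n$, a closed subset of the compact set $C_1$ and thus compact. The sets $X\setminus\bigcap_{n=1}^{N}C_n=\bigcup_{n=1}^{N}(X\setminus C_n)$ increase to $X\setminus K$ as $N\to\infty$ and, by sub-additivity, have $v$-value at most $\sum_{n=1}^{N}\epsilon\,2^{-n-1}\le\epsilon/2$; continuity from below (property~(3) of a capacity) then yields $v(X\setminus K)\le\epsilon/2\le\epsilon$.

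It remains to verify that $u|_K$ is continuous. Fix $n$. Since $K\subseteq C_n$, the finitely many sets $F_{n,k}\cap K$, $k=0,\dots,n-1$, partition $K$ into sets that are closed in $K$; being finitely many, each is therefore also open in $K$. Given $x\in K$, let $k(x)$ be the unique index with $x\in F_{n,k(x)}$; then $F_{n,k(x)}\cap K$ is a relatively open neighbourhood of $x$ in $K$ on which $u$ takes values in $I_{n,k(x)}$, so $|u(y)-u(x)|<1/n$ for all $y$ in this neighbourhood. Letting $n\to\infty$ shows that $u|_K$ is continuous, which completes the proof.

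The one point that needs genuine care is the tension between keeping $C_n$ \emph{compact} while still controlling $v(X\setminus C_n)$. If one attacked an unbounded $u$ directly with a countable partition of $\R$, then $C_n$ would be a countable — not finite — union of compact sets, and trimming it to a finite union would leave an uncontrolled tail $u^{-1}(\R\setminus[-M,M])$; such a tail is not open, so the hypothesis (continuity from above on open sets) cannot be applied to it, not even through Proposition~\ref{prop:reg_cap}. The reduction to a bounded $u$ in the first step is exactly what removes this obstacle; after it, only monotonicity, sub-additivity, continuity from below, and the regularity of Proposition~\ref{prop:reg_cap} enter, and no continuity from above on non-open sets is ever needed.
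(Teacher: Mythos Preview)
Your proof is correct, and it takes a somewhat different (and arguably tidier) route than the paper's. The paper first treats functions with countably many values $u=\sum_{n} a_n\mathds{1}_{A_n}$: it invokes the full conclusion of Proposition~\ref{prop:reg_cap} to get $F_n\subseteq A_n\subseteq O_n$ with $v(O_n\setminus F_n)$ small, uses that $X=\bigcup_n O_n$ together with the continuity-from-above step inside the proof of Proposition~\ref{prop:reg_cap} to truncate to a finite union $K=\bigcup_{n=1}^{N}F_n$, and then passes to general $u$ by uniform approximation with countably-valued functions and intersecting the resulting compacts. Your argument instead reduces to bounded $u$ via a homeomorphism $\phi\colon\R\to(0,1)$, which makes every level-partition \emph{finite}; this lets you avoid the open sets $O_n$ and the truncation argument entirely, using only the inner-approximation half of Proposition~\ref{prop:reg_cap}, sub-additivity, and continuity from below. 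The paper's approach mirrors the classical proof in Cohn and keeps the role of continuity from above on open sets visible at the Lusin step; your approach isolates that hypothesis entirely inside Proposition~\ref{prop:reg_cap} and is more self-contained thereafter. Your closing paragraph correctly identifies why the reduction to bounded $u$ is essential for your route: without it, the analogous $C_n$ would be a countable union of compacts and the missing tail could not be controlled by the available continuity properties.
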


Given the regularity properties of $v$ established in Proposition \ref{prop:reg_cap}, this follows as in the classical case, see e.g. \cite[Proof of Theorem 7.4.3]{cohn2013measure}.

\begin{proof}
First suppose that $u$ has the representation
\begin{align*}
u=  \sum_{n=1}^\infty a_n \mathds{1}_{A_n}
\end{align*}
for real numbers $a_n$ and a countable partition of $X$ given by $\{A_n\ :\ n \in \N\}$. Fix $\epsilon>0$. By Proposition \ref{prop:reg_cap} we can find open sets $O_n$ and closed sets $F_n$ such that $F_n \subseteq A_n \subseteq O_n$ and $v(O_n \setminus F_n) \le 2^{-n-2}\epsilon$. Thus in particular
$$X=\bigcup_{n\in \N} A_n \subseteq \bigcup_{n\in \N} O_n\subseteq X,$$
so that $X=\bigcup_{n\in \N} O_n$.
As in the proof of Proposition \ref{prop:reg_cap} there exists $N\in \N$ such that 
$$v\left(\bigcup_{n=1}^\infty O_n \setminus \bigcup_{n=1}^N F_n \right)\le \epsilon/2$$ and so
\begin{align*}
v\left(\bigcup_{n=N+1}^\infty A_n\right)&=v\left(X\setminus \bigcup_{n=1}^N A_n \right) \\
&\le v\left(X\setminus \bigcup_{n=1}^N F_n\right)\\
&= v\left(\bigcup_{n=1}^\infty O_n \setminus \bigcup_{n=1}^N F_n \right)\\
&\le \epsilon/2,
\end{align*}
where we used the fact that $\{A_n \ :\  n\in \N\}$ is a partition in the first equality and $\bigcup F_n \subseteq \bigcup A_n$ for the first inequality.
Take the compact set $K=\bigcup_{n=1}^N F_n$. Again using the fact that $\{A_n \ :\  n\in \N\}$ is a partition and $F_n \subseteq A_n$, we conclude that $F_n$ are disjoint. Note that $u$ is constant on each $F_n$, so it is clearly continuous on $K$. Furthermore using sub-additivity (5) we have
\begin{align*}
v\left( X\setminus K\right) &\le v\left(\bigcup_{n=1}^N A_n \setminus \bigcup_{n=1}^N F_n\right)+v\left(\bigcup_{n=N+1}^\infty A_n\right) \\
&\le \sum_{n=1}^N v(O_n \setminus F_n)+\epsilon/2 \\
&\le \epsilon/2+\epsilon/2=\epsilon.
\end{align*}
Now let $u$ be an arbitrary Borel measurable function. Then $u$ is the uniform limit of a sequence $\left(u_{n}\right)_{n \in \N}$ of functions, each of which is Borel measurable and has only countably many values (for example, $u_{n}$ might be defined by letting $u_{n}(x)$ be $k / n,$ where $k$ is the integer that satisfies $k / n \leq u(x)<(k+1) / n$). The above then implies that for each $n\in \N$ there is a compact subset $K_{n}$ of $X$ such that $v\left(X\setminus K_{n}\right)<2^{-n}\epsilon $ and such that the restriction of $u_{n}$ to $K_{n}$ is continuous. Let $K=\bigcap_{n=1}^\infty K_{n} .$ Then $K$ is a compact subset of $X$,
$$
v(X\setminus K) \leq \sum_{n=1}^\infty \mu\left(X\setminus K_{n}\right)\le \sum_{n=1}^\infty 2^{-n}\epsilon=\epsilon
$$
and $u$ is continuous on $K$ as the uniform limit of the functions $u_{n},$ which are continuous on $K.$ This concludes the proof.
\end{proof}

We now complete the proof via the following proposition:

\begin{proposition}
Assume that $(X,d)$ is compact metric space and $v$ is a sub-additive capacity satisfying the property
\begin{quote}
for all $\epsilon>0$ and for all Borel measurable functions $u:X\to \R$ there exists a compact set $K=K(u, \epsilon)\subseteq X$, such that $v(X\setminus K)\le \epsilon$ and $u|_{K}$ is continuous.
\end{quote}
Then for all $(O_n)_{n\in \N}$, $O_n$ open, $O_n\downarrow O \quad \Rightarrow \quad \lim_{n \to \infty} v(O_n)=v(O)$.
\end{proposition}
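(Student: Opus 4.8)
The plan is to prove the implication directly. Assume the Lusin property holds for $v$, fix open sets $O_n \downarrow O$, and set $\ell := \lim_{n\to\infty} v(O_n)$; this exists because $(v(O_n))_n$ is non-increasing (monotonicity and $O_n \supseteq O_{n+1}$) and bounded below by $v(O)$ (monotonicity and $O \subseteq O_n$), so $\ell = \inf_n v(O_n) \ge v(O)$. The whole content is the reverse inequality $\ell \le v(O)$. By sub-additivity, $v(O_n) \le v(O) + v(O_n\setminus O)$ for every $n$, so it suffices to show that for every $\eta>0$ there is an index $N$ with $v(O_N\setminus O) \le \eta$, since then $\ell \le v(O_N) \le v(O)+\eta$.

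The key step is to encode the ``layers'' $O_k\setminus O_{k+1}$ into a single Borel function with a \emph{discrete and unbounded} range. Define $u\colon X\to\R$ by $u(x)=k$ if $x\in O_k\setminus O_{k+1}$ for some (necessarily unique) $k\ge 1$, and $u(x)=0$ if $x\in O\cup(X\setminus O_1)$. Since $X$ is partitioned by $X\setminus O_1$, by $O$, and by the sets $O_k\setminus O_{k+1}$, $k\ge 1$, this $u$ is well defined and takes values in $\{0,1,2,\dots\}$. It is Borel measurable because, for $k\ge 1$, one has $\{u\ge k\}=\bigcup_{j\ge k}(O_j\setminus O_{j+1})=O_k\setminus O$ (using that the $O_j$ decrease to $O$), which is Borel, while $\{u\ge k\}=X$ for $k\le 0$.

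Now fix $\eta>0$ and apply the assumed Lusin property to $u$ and $\eta$: there is a compact $K\subseteq X$ with $v(X\setminus K)\le\eta$ and $u|_K$ continuous. Since $K$ is compact and $u|_K$ is continuous, $u(K)$ is a compact, hence bounded, subset of $\{0,1,2,\dots\}$; put $M:=\max_{x\in K}u(x)$. Then no point of $K$ satisfies $u(x)\ge M+1$, i.e. $O_{M+1}\setminus O=\{u\ge M+1\}\subseteq X\setminus K$, whence $v(O_{M+1}\setminus O)\le v(X\setminus K)\le\eta$. By sub-additivity, $v(O_{M+1})\le v(O)+v(O_{M+1}\setminus O)\le v(O)+\eta$, so $\ell=\inf_n v(O_n)\le v(O_{M+1})\le v(O)+\eta$. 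As $\eta>0$ is arbitrary, $\ell\le v(O)$, and therefore $\lim_{n\to\infty}v(O_n)=v(O)$.

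The point that requires care — and the reason the more obvious choice $u=\sum_{k\ge 1}2^{-k}\mathds{1}_{O_k}$ fails — is precisely that the values coding different layers must form a closed, discrete set. If those values accumulated at the value coding $O$, there could be a compact $K$ on which $u|_K$ is continuous that still meets every layer $O_k\setminus O$, giving no control on $v(O_n\setminus O)$; taking $u$ unbounded with range $\{0,1,2,\dots\}$ is exactly what turns compactness of $K$ into ``$K$ meets only finitely many layers''. Beyond this, the argument uses only monotonicity and sub-additivity of $v$.
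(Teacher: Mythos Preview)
Your proof is correct and takes a genuinely different---and cleaner---route than the paper.

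The paper's argument applies Lusin countably many times, once to each indicator $u_n=\mathds{1}_{O_n\setminus O}$ (after first passing to a normalized capacity $\tilde v(A)=v(A\setminus O)/v(X\setminus O)$), obtains compacts $K_n$ on which $u_n$ is continuous, observes that continuity of an indicator forces $K_n\cap(O_n\setminus O)$ to be clopen in $K_n$ and hence compact, intersects to get a decreasing sequence of compacts $\tilde K_n\cap(O_n\setminus O)\downarrow\emptyset$, and finally invokes the finite intersection property to conclude this sequence is eventually empty. Your approach replaces all of this with a single application of Lusin to one function $u$ whose range is the closed discrete set $\{0,1,2,\dots\}$, so that boundedness of $u$ on any compact $K$ on which it is continuous immediately yields $K\cap(O_{M+1}\setminus O)=\emptyset$ for some $M$. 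This avoids the normalization step, the countably many Lusin applications, and the finite-intersection-property argument, and uses only monotonicity and sub-additivity of $v$. Your closing remark about why the bounded choice $u=\sum_k 2^{-k}\mathds{1}_{O_k}$ fails is exactly the point: the unbounded discrete range is what converts ``$u|_K$ continuous on compact $K$'' into ``$K$ meets only finitely many layers''.
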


\begin{proof}
Fix a sequence $(O_n)_{n\in \N}$ of open sets such that  $O_n\downarrow O$. Take $\epsilon>0$. Our aim is to show that $v(O_n)-v(O)\le \epsilon$ for $n$ large enough. Note that sub-additivity of $v$ implies that for Borel sets $A\subseteq B$ we have
\begin{align}\label{eq:subad}
 v(B)-v(A)\le v(B\setminus A).
\end{align}
In particular we can assume that $v(X\setminus O)>0$: indeed, if this is not the case then \eqref{eq:subad} implies that 
$$0 \le v(X)-v(O)\le v(X\setminus O)=0,$$ so that $v(O)=v(X)=1$. Thus we conclude $v(O_n)=1$ for all $n\in \N$ by monotonicity of $v$ and  $O_n \supseteq O$, which means $\inf_{n\in \N} v(O_n)-v(O)=0$, in particular  $v(O_n)-v(O)\le \epsilon$ for $n$ large enough.\\
To simplify notation we now define $$\tilde{v}(A):=\frac{v(A\setminus O)}{v(X\setminus O)}$$
for Borel sets $A\subseteq X$. We note that $\tilde{v}(A)=\tilde{v}(A\setminus O)$ and that $\tilde{v}$ is again a sub-additive capacity. Furthermore, sub-additivity of $v$ implies that 
$$\inf_{n\in \N}\frac{v(O_n)- v(O)}{v(X\setminus O)}\le  \inf_{n\in \N}\frac{v(O_n \setminus O)}{v(X\setminus O)}=\inf_{n\in \N} \tilde{v}(O_n\setminus O),$$
so it is sufficient to show that $$\tilde{v}(O_n\setminus O)\le \frac{\epsilon}{v(X\setminus O)}=:\tilde{\epsilon}$$ for $n$ large enough.
We now define the sequence of functions $(u_n)_{n \in \N}$ via
\begin{align*}
u_n:= \begin{cases}
1 & \text{ if } x\in O_n\setminus O\\
0& \text{ otherwise}.
\end{cases}
\end{align*}
Then for each $n\in \N$ we can apply Lusin's theorem to $\tilde{v}$ in order to obtain a compact set $K_n\subseteq X$ such that $\tilde{v}(X\setminus K_n )\le 2^{-n}\tilde{\epsilon}$ and $u_n|_{K_n}$ is continuous. Then necessarily 
\begin{align}\label{eq:abstand}
\inf\left\{ d(x,y) \ :\ x\in K_n\cap (O_n \setminus O), y \in K_n \cap (O_n \setminus O)^c\right\}>0.
\end{align}
Indeed otherwise there exists $x\in K_n$ such that $x\in \overline{K_n\cap (O_n \setminus O)} \cap \overline{K_n\cap (O_n \setminus O)^c}$, which contradicts continuity of $u_n|_{K_n}$. From \eqref{eq:abstand} we then conclude that $K_n\cap (O_n \setminus O)$ is compact. Now we define a new sequence $(\tilde{K}_n)_{n \in \N}$ of compact sets given by $$\tilde{K}_n =\bigcap_{l=1}^n K_l\subseteq K_n$$ for all $n\in \N$. Then clearly $\tilde{K}_n$ is decreasing, $\tilde{K}_n \cap (O_n\setminus O)$ is compact as a finite intersection of compact sets and 
\begin{align}\label{eq:long}
\begin{split}
\tilde{v}((O_n \setminus O)\setminus \tilde{K}_n) &\le \tilde{v}(X\setminus \tilde{K}_n) \\
&= \tilde{v}\left( X\setminus \bigcap_{l=1}^n K_l \right)\\
&=\tilde{v}\left( \bigcup_{l=1}^n X\setminus K_l \right)\\
&\le \sum_{l=1}^n \tilde{v}(X\setminus K_l) \\
&\le \sum_{l=1}^n  2^{-l}\tilde{\epsilon}\\
&\le \tilde{\epsilon}
\end{split}
\end{align}
for all $n \in \N$. Furthermore
\begin{align*}
\tilde{K}_n \cap (O_n\setminus O) \subseteq O_n \setminus O \downarrow \emptyset
\end{align*}
as $n \to \infty$, which directly implies
\begin{align*}
\tilde{K}_n \cap (O_n\setminus O) \downarrow \emptyset
\end{align*}
as $n \to \infty$. As $\tilde{K}_n \cap (O_n\setminus O)$ is compact and decreasing, there exists $n_0\in \N$ such that
\begin{align*}
\tilde{K}_n \cap (O_n\setminus O) =\emptyset
\end{align*}
for all $n\ge n_0$. In particular
\begin{align}\label{eq:limit}
\tilde{v} (\tilde{K}_n \cap (O_n\setminus O))=\tilde{v}(\emptyset)=0
\end{align}
for all $n\ge n_0$. In conclusion, using sub-additivity of $\tilde{v}$, \eqref{eq:long} and \eqref{eq:limit},
\begin{align*}
\tilde{v} (O_n \setminus O) &\le \tilde{v}(\tilde{K}_n \cap (O_n \setminus O)) + \tilde{v}((O_n \setminus O) \setminus \tilde{K}_n )\\
&\le \tilde{\epsilon}
\end{align*}
for $n\ge n_0$. This concludes the proof.
\end{proof}

\bibliographystyle{plain}
\bibliography{bib}
\end{document}